\title[Pseudomeromorphic currents on subvarieties]{Pseudomeromorphic currents on subvarieties}
\author{Mats Andersson}
\thanks{The author was partially supported by a grant from the Swedish Research Council.} 
\subjclass[2011]{32A27, 32B15, 32C25, 32C30}
\address{Department of Mathematical Sciences, Division of Mathematics, University of Gothenburg and 
Chalmers University of Technology, SE-412 96 G\"{o}teborg, Sweden}
\email{matsa@chalmers.se,  larkang@chalmers.se}
\newtheorem{thm}{Theorem}[section]
\newtheorem{lma}[thm]{Lemma}
\newtheorem{prop}[thm]{Proposition}
\theoremstyle{definition}
\theoremstyle{remark}
\newtheorem{preremark}{Remark}
\newtheorem{preex}{Example}
\newcommand{\Ker}{{\text{Ker}\,}}
\newcommand{\C}{\mathbb{C}}
\newcommand{\debar}{\bar{\partial}}
\newcommand{\dbar}{\bar{\partial}}
\newcommand{\R}{\mathbb{R}}
\newcommand{\E}{\mathscr{E}}
\newcommand{\W}{\mathcal{W}}
\newcommand{\V}{\mathcal{V}}
\newcommand{\PM}{\mathcal{PM}}
\newcommand{\D}{\mathscr{D}}
\newcommand{\pmm}{pseudomeromorphic }
\newcommand{\nbh}{neighborhood }
\newcommand{\1}{{\bf 1}}
\newcommand{\w}{{\wedge}}
\newcommand{\re}{{\text{Re}\,}}
\newcommand{\U}{{\mathcal U}}
\newcommand{\Cu}{{\mathcal C}}
\def\newop#1{\expandafter\def\csname #1\endcsname{\mathop{\rm #1}\nolimits}}
\date{2014-01-01}
\numberwithin{equation}{section}
\begin{document}
\nocite{*}
\bibliographystyle{plain}

\begin{abstract}
Let $i\colon X \to Y$ be pure-dimensional reduced subvariety of a smooth manifold $Y$.
We prove that the direct image of pseudomeromorphic currents on $X$ are 
pseudomeromorphic on $Y$.  We also prove  a partial converse:  if $i_*\tau$ is pseudomeromorphic
and has the standard extension property, then $\tau$ is pseudomermorphic on $X$.
\end{abstract}

\maketitle
\thispagestyle{empty}

\section{Introduction}

Let $X$ be a pure-dimensional analytic space.  In \cite{AW2} was introduced the sheaf 
$\PM^X$ of pseudomeromorphic currents, and the definition was somewhat further widened
in \cite{AS}.  The principal examples are semi-meromorphic forms and $\dbar$ of such forms,
as well as direct images under modifications, natural projections, and open inclusions, of such
currents. 

The interest of this sheaf relies on two facts. To begin with, many currents that occur in multivariable residue theory are
pseudomeromorphic; for instance Coleff-Herrera products, \cite{CoHe}, the more general
Coleff-Herrera currents, \cite{Bj},  
 Bochner-Martinelli type currents, introduced in \cite{PTY}, and for instance the currents introduced in \cite{A1}
and \cite{AW1}.   Moreover,   pseudomeromorphic currents have some  "geometric" properties that are similar
to  basic properties of positive closed $(*,*)$-currents.   For instance, for each analytic subvariety $V\subset X$ 
and pseudomeromorphic  current $\mu$ on $X$, the natural restriction of $\mu$ to $X\setminus V$ has a canonical 
pesudomeromorphic extension $\1_{X\setminus V}\mu$ to $X$, and 
\begin{equation}\label{anita}
\1_V\mu:=\mu- \1_{X\setminus V}\mu
\end{equation}
 is pseudomeromorphic and has support on $V$.  If $V'$ is another subvariety, then 
\begin{equation}\label{rakneregel}
\mathbf{1}_V \mathbf{1}_{V'} \mu = \mathbf{1}_{V\cap V'} \mu.
\end{equation} 
Moreover, we have the  {\it dimension principle}, 
that states that if $\tau$ is a \pmm $(*,p)$-current with support on
an analytic set with codimension larger than $p$, then $\tau$ must vanish.
These basic properties very useful or even indispensable  tools in, for instance, 
\cite{AW2, ASS, AS, AWsemester, Lark1, Lark2, Lund, Sz}.


If $\mu$ is pseudomeromorphic and has support on a pure-dimensional subvariety $V\subset X$ we say that
$\mu$ has the  {\it standard extension property}, SEP, with respect to $V$,  if 
$\1_A\mu=0$ for each subvariety $A\subset V$ of positive codimension. 
 We let $\W^X_V$ denote the sheaf of \pmm currents on $X$ with support and the SEP on $V$.

\smallskip

Assume that 
$i\colon X\to Y$ is an embedding of a reduced pure-dimensional space $X$ into a smooth manifold $Y$.
Recall that the sheaf of smooth forms on $X$ is defined as the quotient sheaf $\E^X:=\E^Y/\Ker i^*$.
The image of $\xi$ in $\E^X$ is denoted by $i^*\xi$.
By definition $\tau$ is a current on $X$, $\tau$ in $\Cu^X$,  if it is in the dual of $\E^X$. This means that there is 
a current $\mu$ on $Y$ with support on $X$
such that $\xi\wedge \mu=0$ for all test forms $\xi$ such that $i^*\xi=0$, so that $\tau.i^*\xi:=\mu.\xi$.
It is therefore natural to write $\mu=i_*\tau$.    There is an induced $\dbar$-operator on forms
and currents on $X$.  
Here is our main result in this note.

\begin{thm}\label{L1}
 Assume that 
$i\colon X\to Y$ is an embedding of a reduced pure-dimensional space $X$ into a smooth manifold $Y$.

\smallskip
\noindent (i) \  If $\tau$ is in $\PM^X$,   then $i_*\tau$ is in $\PM^Y$, 
and if $\tau$ is in $\W^X$ then $i_*\tau$ is in $\W^Y_X$.

\smallskip
\noindent (ii) \  If $\tau$ is in $\Cu^X$ and $i_*\tau$ is in $\PM^Y$, and in addition,
\begin{equation}\label{pia2}
\1_{X_{sing}}i_*\tau=0,
\end{equation}
then $\tau$ is in $\PM^X$. If $i_*\tau$ is in $\W^Y_X$, then
$\tau$ is in $\W^X$.
\end{thm}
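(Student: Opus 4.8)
The plan is to reduce everything, by the local nature of the assertions, to the situation where $Y$ is an open subset $\Omega$ of $\C^m$ and $i\colon X\hookrightarrow\Omega$ is a closed subvariety, and to treat the two inclusions $\PM$ and $\W$ simultaneously: once the $\PM$-statements in (i) and (ii) are established, the $\W$-statements follow from the fact that $\1_A$ commutes with $i_*$ for every analytic $A\subseteq Y$. Indeed, if $f$ cuts out $A$ then $|f|^{2\lambda}i_*\tau=i_*(|i^*f|^{2\lambda}\tau)$, and since $i_*\colon\Cu^X\to\Cu^Y$ is a homeomorphism onto its image (because $i^*\colon\E^Y\to\E^X$ is surjective), the limit $\1_{Y\setminus A}i_*\tau=\lim_{\lambda\to 0}|f|^{2\lambda}i_*\tau$ exists if and only if $\lim_{\lambda\to 0}|i^*f|^{2\lambda}\tau$ does, and then $\1_A i_*\tau=i_*\1_{i^{-1}A}\tau$. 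In particular $\1_{X_{sing}}i_*\tau=0$ is equivalent to $\1_{X_{sing}}\tau=0$; and for $\tau\in\PM^X$ the SEP of $\tau$ on $X$ and the SEP of $i_*\tau$ on $X$ correspond to each other, so $\tau\in\W^X\Leftrightarrow i_*\tau\in\W^Y_X$ once the $\PM$-part is known. Finally, since $i_*\tau$ is always supported on $X$, there is nothing more to check for the support.

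For (i) the key point is the case of a closed submanifold. If $V\subseteq M$ is a closed complex submanifold of a manifold $M$ and $\sigma\in\PM^V$, then $i_*\sigma\in\PM^M$: in coordinates $(z',z'')$ on $M$ with $V=\{z''=0\}$, $i_*\sigma$ is the external product of $\sigma$ with the point--mass current $\bigwedge_l\dbar(1/z''_l)\w dz''_l$ on $\C^{m-n}_{z''}$, and external products of pseudomeromorphic currents are pseudomeromorphic (write $\sigma$ locally as a finite sum of pushforwards $\pi^k_*\omega_k$ of elementary currents under compositions of modifications, simple projections and open inclusions, and take the corresponding products with $\pi^k\times\mathrm{id}$). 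For the general embedding $i\colon X\to Y$ one then writes $\tau=\sum_j\pi^j_*\mu_j$ locally, with $\mu_j$ elementary on an open $Z_j\subseteq\C^{N_j}$, so $i_*\tau=\sum_j(i\circ\pi^j)_*\mu_j$. Each holomorphic map $i\circ\pi^j\colon Z_j\to Y$ factors through its graph: $z\mapsto(z,i\pi^j(z))$ is a biholomorphism of $Z_j$ onto a closed submanifold $\Gamma_j$ of the manifold $Z_j\times Y$, and $i\circ\pi^j$ is the restriction to $\Gamma_j$ of the projection $Z_j\times Y\to Y$. Transporting $\mu_j$ to $\Gamma_j$ and applying the submanifold case gives a pseudomeromorphic current on $Z_j\times Y\subseteq\C^{N_j}\times Y$; pushing it forward under the simple projection $\C^{N_j}\times Y\to Y$ (and the open inclusion), which preserves pseudomeromorphy, yields $i_*\tau\in\PM^Y$.

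For (ii) we use first the converse in the submanifold case, which is elementary: a closed submanifold $V\subseteq M$ admits local retractions $p$ that are compositions of simple projections and satisfy $p\circ i=\mathrm{id}_V$, so $i_*\sigma\in\PM^M$ implies $\sigma=p_*(i_*\sigma)\in\PM^V$; applied over $X_{reg}\subseteq\Omega\setminus X_{sing}$ this gives $\tau|_{X_{reg}}\in\PM^{X_{reg}}$, and by the first paragraph $\tau=\1_{X\setminus X_{sing}}\tau$. Now choose an embedded resolution $\pi\colon\widetilde\Omega\to\Omega$ of $X\subseteq\Omega$ near a given point: $\widetilde\Omega$ smooth, $\pi$ an isomorphism over $\Omega\setminus X_{sing}$, the strict transform $\widetilde X$ smooth and $\pi|_{\widetilde X}\colon\widetilde X\to X$ a resolution. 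The crucial step is to lift $i_*\tau$: there is $T\in\PM^{\widetilde\Omega}$ with $\pi_*T=i_*\tau$ and $T=\1_{\widetilde\Omega\setminus\pi^{-1}X_{sing}}T$. Granting this, $\mathrm{supp}\,T\subseteq\pi^{-1}(X)$ (since $i_*\tau$ vanishes off $X$ and $\pi$ is an isomorphism there), hence $\mathrm{supp}\,T\subseteq\widetilde X$ together with exceptional components over $X_{sing}$; but $\1_{\pi^{-1}X_{sing}}T=0$, so $\mathrm{supp}\,T\subseteq\widetilde X$. From $\pi_*T=i_*\tau\in i_*\Cu^X$, together with the absence of mass of $T$ over $X_{sing}$ and the fact that $\pi$ is an isomorphism over $X_{reg}$, one checks that $T$ has no normal-derivative components along $\widetilde X$, i.e. $T=j_*T_0$ with $T_0\in\Cu^{\widetilde X}$, where $j\colon\widetilde X\hookrightarrow\widetilde\Omega$. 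By the submanifold converse, $T_0\in\PM^{\widetilde X}$. Finally $i_*\tau=\pi_*T=\pi_*j_*T_0=i_*\big((\pi|_{\widetilde X})_*T_0\big)$, and $(\pi|_{\widetilde X})_*T_0\in\PM^X$ because pushforward under a modification preserves pseudomeromorphy; injectivity of $i_*$ then gives $\tau=(\pi|_{\widetilde X})_*T_0\in\PM^X$. The $\W$-part of (ii) follows from this together with the first paragraph.

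The main obstacle is the lifting step in (ii): producing $T\in\PM^{\widetilde\Omega}$ with $\pi_*T=i_*\tau$ and $\1_{\pi^{-1}X_{sing}}T=0$. A pseudomeromorphic current cannot in general be lifted along a modification, and this is exactly where the full strength of the hypothesis $\1_{X_{sing}}i_*\tau=0$ is needed: writing $i_*\tau$ locally as $\sum_k\rho^k_*\omega_k$ and replacing each $\omega_k$ by $\1_{(\rho^k)^{-1}(\Omega\setminus X_{sing})}\omega_k$, one can reroute every tower $\rho^k$ through $\pi$, because after this truncation the residue factors of $\omega_k$ have the standard extension property in the relevant variables and therefore admit pullbacks across the additional blow-ups, up to terms annihilated by the dimension principle; summing the rerouted terms gives $T$. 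Apart from this, the argument only uses routine properties of the pseudomeromorphic calculus — closure under pushforward by modifications, simple projections and open inclusions, compatibility of $\1_A$ with proper pushforward, and that external products of pseudomeromorphic currents are pseudomeromorphic — together with the elementary submanifold facts isolated above.
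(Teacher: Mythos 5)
Your part (i) is correct and takes a genuinely different route from the paper's. Instead of invoking a strong desingularization of $X\subset Y$ together with the surjectivity of $p_*\colon\PM(\widetilde X)\to\PM(X)$ (the paper's Proposition~\ref{kraka}), you unwind the definition of $\tau\in\PM^X$ and push each term $(i\circ\pi^j)_*\mu_j$ through the graph of $i\circ\pi^j$ in $Z_j\times Y$, reducing directly to the closed-submanifold case (tensor product with the point mass $\dbar(1/w)\w dw$, which is the paper's Lemma~\ref{tensor} and \eqref{pia}) followed by a simple projection. That is legitimate and arguably more economical for (i). The reduction of the $\W$-statements to the $\PM$-statements via $\1_A i_*\tau=i_*\1_{i^{-1}A}\tau$ is also the paper's argument.

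The gap is in part (ii), exactly at the step you flag as crucial: producing $T\in\PM^{\widetilde\Omega}$ with $\pi_*T=i_*\tau$ and $\1_{\pi^{-1}X_{sing}}T=0$. Your justification rests on a false premise and an unsubstantiated mechanism. It is not true that a pseudomeromorphic current ``cannot in general be lifted along a modification'': since $\widetilde\Omega$ is smooth, \emph{every} $\mu\in\PM(\Omega)$ is of the form $\pi_*\hat\mu$ with $\hat\mu\in\PM(\widetilde\Omega)$, with no hypothesis on $\mu$ at all; this is Proposition~\ref{kraka}, and its proof is the real technical content of the paper (lift the whole tower over $\pi$ to a commutative diagram, further resolve the pulled-back coordinate functions to monomials, and run the multi-parameter analytic continuation of Lemmas~\ref{1doda} and~\ref{1ursnygg}). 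Your proposed mechanism --- that truncating by $\1_{(\rho^k)^{-1}(\Omega\setminus X_{sing})}$ gives the residue factors the SEP and thereby permits ``pullbacks across the additional blow-ups, up to terms killed by the dimension principle'' --- does not work: the truncation confers no SEP on residue factors whose polar sets are not contained in $(\rho^k)^{-1}(X_{sing})$, residue currents are not pulled back anywhere in this calculus, and no codimension count is given that would let the dimension principle absorb error terms. You have also misplaced the role of hypothesis \eqref{pia2}: it is not what makes the lift exist, but what guarantees that after discarding the piece $\1_{\pi^{-1}X_{sing}}$ of the (unconditionally existing) lift --- which is needed to force the support into the strict transform $\widetilde X$ rather than all of $\pi^{-1}(X)$ --- the pushforward is still $i_*\tau$. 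The remainder of your (ii) (support in $\widetilde X$, $T=j_*T_0$, the submanifold converse via retractions, pushing down by $\pi|_{\widetilde X}$ and injectivity of $i_*$) agrees with the paper, but as written the proof is incomplete without an actual proof of the lifting statement.
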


That is,  we have the natural mappings
$$
i_* \colon \PM^X\to \PM^Y,  \quad i_*\colon \W^X\to \W^Y_X.
$$
Notice that the condition \eqref{pia2} in (ii)  is  automatically fulfilled if $i_*\tau$ is in $\W^Y_X$.

The proof of Theorem~\ref{L1} relies  very much on the existence of a so-called 
strong desingularization, see below. However we also need
the following  result which is interesting in itself. 

\begin{prop}\label{kraka}
If $p\colon X'\to X$ is a  modification and $X'$ is smooth,
then 
$$
p_*\colon\PM(X')\to\PM(X)
$$
is surjective.
\end{prop}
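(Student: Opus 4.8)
\emph{Plan of proof.}
The statement is local and stable under partitions of unity (because $\PM$ is a sheaf and a module over smooth functions), so it is enough to treat a $\mu\in\PM(X)$ of compact support, and then, by the very definition of $\PM$, a single building block $\mu=\pi_*\alpha$, where $\alpha$ is an elementary current on a manifold $W$ and $\pi\colon W\to X$ is a finite composition of modifications, projections $\C^m\times(\cdot)\to(\cdot)$, and inclusions of open sets. To lift $\pi$ across $p$ I would take the fibre product, over $X$, of the tower defining $\pi$ with $p\colon X'\to X$; base change carries each of the three admissible operations to an operation of the same kind, so — after discarding the components that do not dominate $W$ and resolving singularities — one obtains a smooth $\widetilde W$, a modification $\widetilde\sigma\colon\widetilde W\to W$, and a map $\widetilde\pi\colon\widetilde W\to X'$, itself a composition of modifications, projections and open inclusions, with $p\circ\widetilde\pi=\pi\circ\widetilde\sigma$. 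Since $\PM$ is closed under direct images along these three operations, it now suffices to find $\widetilde\alpha\in\PM(\widetilde W)$ with $\widetilde\sigma_*\widetilde\alpha=\alpha$: then $\tau:=\widetilde\pi_*\widetilde\alpha$ lies in $\PM(X')$ and $p_*\tau=(p\widetilde\pi)_*\widetilde\alpha=(\pi\widetilde\sigma)_*\widetilde\alpha=\pi_*\alpha=\mu$.

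We are thus reduced to the following lemma: for a modification $\sigma\colon M'\to M$ of smooth manifolds and an elementary current $\alpha$ on $M$, there is $\widetilde\alpha\in\PM(M')$ with $\sigma_*\widetilde\alpha=\alpha$. To prove it I would use Hironaka, in its embedded form, to dominate $\sigma$ by a finite sequence of blow-ups along smooth centres, $M''\to M'\to M$, arranged so that at every stage, in suitable local coordinates, the centre and the transform of the union of the coordinate hyperplanes figuring in the local expression of $\alpha$ are all coordinate subspaces; as $\PM$ is preserved by direct images along modifications, it is enough to solve the lifting for $M''\to M$ and push the answer down to $M'$. Working one blow-up at a time, we may assume $\sigma$ is a single blow-up of a coordinate subspace $Z$, with exceptional divisor $E$. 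Write $\alpha=\1_{M\setminus Z}\alpha+\1_Z\alpha$. The first term is handled by the strict transform $\beta:=\1_{M'\setminus E}(\sigma^{\star}\alpha)$, where $\sigma^{\star}\alpha$ denotes the naive monomial substitution in the local expression of $\alpha$: $\beta$ is \pmm, it equals the honest pull-back of $\alpha$ off $E$, and since $\sigma$ is biholomorphic off $Z$ one checks $\sigma_*\beta=\1_{M\setminus Z}\alpha$, the $Z$-supported error being annihilated by $\1_{M\setminus Z}$. For the second term, the calculus of the $\1_V$ together with the dimension principle shows that $\1_Z\alpha$ vanishes unless $Z$ contains the "residue locus" $V=\{s_1=\cdots=s_r=0\}$ of $\alpha$, in which case $\1_Z\alpha=\alpha$; in that case one produces, chart by chart on the blow-up, an explicit elementary current $\alpha'$ supported on $E$ — the same monomial substitution but with the exponent in the exceptional variable suitably increased — and verifies $\sigma_*\alpha'=\alpha$ by a change of variables in the standard residue pairing, the Jacobian $w_1^{\codim Z-1}$ of the blow-up cancelling exactly the shifts produced by the substitution and the Leibniz rule. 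Taking $\widetilde\alpha$ to be the push-down of $\beta$ plus $\alpha'$ completes the lemma, and with it the proposition.

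The main obstacle is this last step: arranging, by the embedded resolution, that every successive blow-up centre really is a coordinate subspace for the (repeatedly transformed) local data of $\alpha$, and then running the residue-pairing computation — Leibniz-expanding $\dbar\frac1{(w_1w_j)^{a_j}}$, dropping the terms with a repeated $d\bar w_1$, collapsing $\frac1{w_1^{c}}\dbar\frac1{w_1^{d}}$ into a multiple of $\dbar\frac1{w_1^{c+d}}$, and matching against the Jacobian — carefully enough to conclude that $\sigma_*\alpha'$ is \emph{exactly} $\alpha$ and not merely a current with the same support. All the remaining ingredients are formal: functoriality of $\PM$ and of $\sigma_*$ under composition, the identity $\1_{V}\1_{V'}=\1_{V\cap V'}$ and the behaviour of $\1$ under direct images, and the dimension principle recalled in the introduction.
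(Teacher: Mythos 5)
Your overall architecture coincides with the paper's: localize to a single building block $\mu=\pi_*\alpha$ with $\alpha$ elementary, lift the whole tower defining $\pi$ across $p$ by a commutative diagram whose vertical arrows are modifications and whose horizontal arrows remain of the three admissible types, and thereby reduce everything to the statement that an elementary current $\alpha$ on $M$ can be written $\sigma_*\widetilde\alpha$ for a modification (or projection) $\sigma$ and some $\widetilde\alpha\in\PM$ upstairs. That reduction is exactly Lemma~\ref{1ursnygg} in the paper, and it is the heart of the proposition. It is precisely there that your argument has a genuine gap, one you yourself flag as ``the main obstacle'': you never actually establish that your chart-by-chart candidate $\alpha'$ satisfies $\sigma_*\alpha'=\alpha$ exactly, and the object you start from is not even well defined as stated. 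After a blow-up the pullbacks $\sigma^*t_j$ become monomials sharing variables, so the ``naive monomial substitution'' produces products of residue factors $\dbar(1/s^{b_1})\w\dbar(1/s^{b_2})\w\cdots$ with overlapping singularities; such products are not elementary currents in the sense of \eqref{element}, they require a regularization convention to be defined at all, and their value depends on the order in which the regularizing parameters are sent to $0$ (this is exactly the remark following Lemma~\ref{1ursnygg}). Your subsequent Leibniz/Jacobian bookkeeping is therefore being performed on an object that has not been pinned down, and the case analysis via $\1_Z\alpha$ versus $\1_{M\setminus Z}\alpha$, the strict transform $\beta$, and the ``residue locus'' discussion do not by themselves close this.

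The paper's device dissolves the difficulty without any computation. Replace $\alpha$ by its regularization $\alpha^{\lambda}$, in which each factor $1/t_j^{a_j}$ (resp.\ $\dbar(1/t_j^{a_j})$) carries a factor $|t_j|^{2\lambda_j}$ (resp.\ $\dbar|t_j|^{2\lambda_j}$). For $\re\lambda_j\gg 0$ this is a locally integrable form, so the identity $\alpha^\lambda=\sigma_*\sigma^*\alpha^\lambda$ is immediate for a modification or a simple projection. One then takes a further modification so that each $\sigma^*t_j$ is locally a monomial times a nonvanishing factor, and Lemma~\ref{1doda} (applied one $\lambda_j$ at a time, with a partition of unity supplying the strictly positive factors $\gamma$) shows that $\sigma^*\alpha^\lambda$ continues analytically to $\lambda=0$ with value a finite sum of genuine elementary currents $\tilde\tau_\ell$. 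Uniqueness of analytic continuation then gives $\alpha=\sigma_*\sum_\ell\tilde\tau_\ell$ \emph{exactly} --- no Jacobian cancellation, no separate treatment of the exceptional divisor, and no case analysis on whether the centre contains the residue locus. I would encourage you to replace the strict-transform computation by this $\lambda$-argument; the rest of your proposal (the fibre-product lifting of the tower and the final bookkeeping $p_*\tilde\pi_*\widetilde\alpha=\pi_*\sigma_*\widetilde\alpha=\mu$) is sound and is essentially what the paper does.
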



\section{Pseudomeromorphic currents}

Recall that in one complex variable $t$ one can define  the {\it principal value current}  $1/t^m$, $m\ge 1$, 
as the value at $\lambda=0$ of the analytic continuation of $|t|^{2\lambda}/t^m$,
a~priori defined when $\re\lambda\gg 0$. The {\it residue current}
$\dbar(1/t^m)$ is the value at $\lambda=0$ of $\dbar|t|^{2\lambda}/t^m$; clearly it has  support
at $t=0$.

Assume now that $t_j$ are holomorphic coordinates in an open set $U\subset\C^N$. 
Since we can take tensor products of one-variable currents, we can form
the current 
\begin{equation}\label{element}
\tau=\debar \frac{1}{t_1^{a_1}}\wedge \cdots \wedge \debar \frac{1}{t_r^{a_r}}\wedge 
\frac{\alpha(t)}{t_{r+1}^{a_{r+1}}\cdots t_N^{a_N}},
\end{equation}
where $a_1,\ldots,a_r$ are positive integers, $a_{r+1},\ldots,a_N$ are nonnegative
integers, and  $\gamma$ is a smooth form with compact support in $U$. Such a $\tau$
is called an {\em elementary (pseudomeromorphic) current} in $U$.  It is  commuting
in the principal value factors and anti-commuting in the residue factors.

\smallskip

Fix a point $x\in X$. We say that a germ $\mu$  of a current at $x$ is {\it pseudomeromorphic} at $x$,
$\mu\in \PM_x$,  if it is a finite sum
of currents of the form
$\pi_*\tau=\pi_*^1 \cdots \pi_*^m \tau$,  where $\U$ is a \nbh of $x$, 
\begin{equation}\label{1struts}
\U^m \stackrel{\pi^m}{\longrightarrow} \cdots \stackrel{\pi^2}{\longrightarrow} \U^1 
\stackrel{\pi^1}{\longrightarrow} \U^0=\U,
\end{equation}
each $\pi^j\colon \U^j \to \U^{j-1}$ is either a modification,  a simple projection
$\U^{j-1}\times Z \to \U^{j-1}$, or an open inclusion (i.e., $\U^{j}$ is an open subset
of $\U^{j-1}$), and $\tau$ is elementary on $\U^m$.

By definition the union $\PM=\cup_x\PM_x$ is an open subset of the sheaf $\Cu=\Cu^X$
and hence it is a subsheaf, the  sheaf of {\it pseudomeromorphic}  currents,  of $\Cu$.
A section $\mu$ of $\PM$ over an open set $\V\subset X$,  $\mu\in\PM(\V)$, is then a locally finite
sum 
\begin{equation}\label{batting}
\mu=\sum (\pi_\ell)_*\tau_\ell,
\end{equation}
where each $\pi_\ell$ is a composition of mappings as in \eqref{1struts}
(with $\U\subset\V$) and $\tau_\ell$ is elementary.  
The definition here is from \cite{AS} and it is in turn a slight elaboration of the
definition introduced in \cite{AW2}.

 If $\xi$ is a smooth form, then 
$\xi\w \pi_*\tau=
\pi_*\big(\pi^*\xi\w\tau\big).
$
Thus $\PM$ is closed under exterior multiplication by smooth forms.
Notice that if $\tau$ is an elementary   current, then $\dbar\tau$ is a finite
sum of elementary currents. Since moreover $\dbar$ commutes with push-forwards
it follows that $\PM$ is closed under $\dbar$. 

Assume that  $\mu$ is \pmm  and $V$ is a subvariety. Let $h$ be a tuple of holomorphic functions such that
the common zero set is precisely $V$.  The  function
$\lambda\mapsto|h|^{2\lambda}\mu$ (a priori defined for $\re \lambda \gg 0$) has a current-valued analytic continuation
to $\re \lambda >-\epsilon$.  The value at $\lambda=0$ is precisely the pseudomeromorphic current $\mathbf{1}_{X\setminus V}\mu$ mentioned above, 
and we  write 
\begin{equation}\label{restrikdef}
\mathbf{1}_{X\setminus V}\mu = |h|^{2\lambda} \mu |_{\lambda =0}.
\end{equation} 
One can also obtain $\mathbf{1}_{X\setminus V}\mu$ as a principal value: If $\chi$ is a smooth approximand
 of the characteristic function of $[1,\infty)$ on $\R$, then 
\begin{equation}\label{restrikdef2}
\mathbf{1}_{X\setminus V}\mu = \lim_{\delta\to 0^+} \chi(|h|/\delta) \mu.
\end{equation}
Notice that $\1_V \mu=\mu$   if $\mu$ has support on $V$,
 cf., \eqref{anita}.
The existence of \eqref{restrikdef} and the independence of $h$ follow from the corresponding statements 
for elementary currents, 
noting that if $\mu =\pi_* \tau$, then $|h|^{2\lambda} \mu =\pi_* (|\pi^* h|^{2\lambda}\tau)$
for $\re \lambda \gg 0$.  
In the same way one can reduce the verification of \eqref{restrikdef2} to the case
with elementary currents.
Notice that if $p$ is a modification or simple projection, then, cf.,  \eqref{restrikdef},
\begin{equation}\label{pia3}
\xi\w p_*\tau=p_*(p^*\xi\w\tau), \quad  \1_V p_*\tau=p_* \big(\1_{p^{-1}V} \tau\big).
\end{equation}

If $\tau$ is \pmm and has support on  $V$, and  $h$ is a holomorphic function that
vanishes on $V$, then $\bar h \tau=0$ and $d\bar h\w \tau=0$, see \cite{AW2, AS}.
This intuitively means 
that the current $\tau$ only involves holomorphic derivatives of test forms.


\section{Proofs} 

\begin{lma}\label{1doda}
Assume that  $\tau$ is an elementary current of the form \eqref{element}.  Let
$t^b=t_1^{b_1}\cdots t_r^{b_r}$ be a monomial and $\gamma$ a strictly positive smooth function.  
 Then
$$
\frac{|t^b|^{2\lambda}\gamma^\lambda}{t^b} \tau, \quad
\frac{\dbar\big(|t^b|^{2\lambda}\gamma^\lambda\big)}{t^b} \w \tau, 
$$
both have analytic continuation to $\re\lambda>-\epsilon$, and  the values at $\lambda=0$ are
elementary  \pmm currents that are  independent of $\gamma$, 
\end{lma}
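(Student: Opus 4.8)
The statement is essentially a one-variable computation, lifted to a tensor product, so the plan is to reduce everything to the one-complex-variable principal value current and its $\dbar$. Writing $\tau$ as in \eqref{element}, the factor $|t^b|^{2\lambda}\gamma^\lambda/t^b$ only involves the variables $t_1,\dots,t_r$, and $t^b$ is a monomial in exactly those variables. So the first step is to factor $\tau$ as $\bigwedge_{j=1}^r \dbar(1/t_j^{a_j})$ wedged with a smooth compactly supported form in the remaining variables (absorbing $\alpha(t)/(t_{r+1}^{a_{r+1}}\cdots t_N^{a_N})$), and observe that multiplication by $|t^b|^{2\lambda}\gamma^\lambda/t^b$ interacts only with the first $r$ residue factors. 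By the tensor-product structure of elementary currents, it suffices to treat the case $r=1$, i.e. to analyze $|t|^{2\lambda}\psi^\lambda/t^{b} \cdot \dbar(1/t^{a})$ and $\dbar(|t|^{2\lambda}\psi^\lambda)/t^{b}\w\dbar(1/t^{a})$ in a single variable $t$, where $\psi>0$ is smooth — and then tensor back.

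\textbf{One-variable step.} In one variable $\dbar(1/t^a)$ is supported at the origin, so against a test form $\varphi\,dt$ (or $\varphi\, d\bar t$, etc.) it picks out finitely many Taylor coefficients of $\varphi$ at $0$ together with derivatives of $\psi$ at $0$; concretely one integrates by parts to move the $\dbar$ onto $|t|^{2\lambda}\psi^\lambda \varphi/t^b$ and then one is reduced to showing that expressions like
$$
\int_{|t|<\epsilon} |t|^{2\lambda}\psi(t)^\lambda\, \frac{\partial^k}{\partial t^k}\!\Big(\frac{\varphi(t)}{t^{a+b}}\Big)\, dt\w d\bar t
$$
extend analytically past $\lambda=0$. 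Expanding $\psi^\lambda = e^{\lambda\log\psi}$ as a power series in $\lambda$ with smooth coefficients, this is a finite sum of standard Mellin-type integrals $\int |t|^{2\lambda}(\text{smooth})/t^{m}$, each of which has meromorphic continuation with at worst a simple pole at $\lambda=0$ removed by the residue factor, by the classical one-variable analysis recalled at the start of Section~2. To see the value at $\lambda=0$ is elementary and $\gamma$-independent: every term carrying a positive power of $\lambda$ from the expansion of $\psi^\lambda$ dies at $\lambda=0$ except where it meets a pole, and the polar contributions are residue terms that by construction reproduce $\dbar(1/t^{m})$ with smooth coefficients evaluated at $0$ — where $\psi$ enters only through $\psi(0)^0=1$ and through factors $\lambda\cdot(\partial^j\log\psi)(0)$ that need a compensating pole; one checks these cancel, leaving the same current one gets with $\psi\equiv 1$. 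The resulting current is again a finite sum of terms $\dbar(1/t^{m'})\w(\text{smooth})$ or $(\text{smooth})/t^{m'}$, i.e. elementary.

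\textbf{Reassembly and main obstacle.} Tensoring the one-variable result over $j=1,\dots,r$ and wedging with the untouched smooth/principal-value factors in $t_{r+1},\dots,t_N$ gives that both currents in the statement have analytic continuation to $\re\lambda>-\epsilon$ with elementary values at $\lambda=0$, independent of $\gamma$; the case of the second current is handled identically since $\dbar(|t^b|^{2\lambda}\gamma^\lambda)/t^b$ is, after expanding, a finite sum of terms of the first type with a $\dbar$ of a smooth function or of $|t_j|^{2\lambda}$ attached, and $\dbar$ of an elementary current is again a finite sum of elementary currents. The step I expect to be the real work is the bookkeeping showing $\gamma$-independence of the value at $\lambda=0$: one must track, through the $\psi^\lambda$ expansion and the integration by parts, exactly which terms produce poles at $\lambda=0$ and verify that all the $\gamma$-dependent contributions either vanish at $\lambda=0$ or cancel against each other, so that the $\lambda=0$ value coincides with the $\gamma\equiv 1$ case. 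Everything else is the routine one-variable Mellin analysis recalled in Section~2 together with the tensor-product formalism for elementary currents.
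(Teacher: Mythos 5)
Your overall plan (reduce to the $\gamma\equiv 1$ case, where everything is a one-variable computation, and then tensor back) matches the first half of the paper's proof, and for $\gamma\equiv 1$ your sketch is essentially correct. But there are two genuine gaps. First, the reduction to a single variable via the tensor-product structure does not go through once $\gamma$ is present: $\gamma$ is a strictly positive smooth function of \emph{all} the variables $t_1,\dots,t_N$ (in the application in Lemma~\ref{1ursnygg} it arises as the modulus squared of a nonvanishing holomorphic factor of a pulled-back coordinate), so $\gamma^\lambda$ does not split as a product over the $t_j$ and the currents in question are not tensor products of one-variable currents. Second, and more seriously, the entire content of the lemma beyond the standard one-variable facts is the $\gamma$-independence of the value at $\lambda=0$, and that is precisely the step you leave as unexecuted ``bookkeeping'' with ``one checks these cancel.'' Note moreover that $\psi^\lambda=e^{\lambda\log\psi}$ is an \emph{infinite} power series in $\lambda$, not ``a finite sum of standard Mellin-type integrals,'' so even granting the cancellations you would still need uniform estimates to justify termwise analytic continuation and summation of the tail.

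The paper disposes of both difficulties with a short two-parameter trick: replace $\gamma^\lambda$ by $\gamma^\mu$ with an independent complex parameter $\mu$, and observe that $(\lambda,\mu)\mapsto |t^b|^{2\lambda}\gamma^{\mu}t^{-b}\,\tau$ and its $\dbar$-analogue are analytic on $\{\re\lambda>-\epsilon\}\times\C$ (for fixed $\mu$ the smooth factor $\gamma^\mu$ is simply absorbed into $\alpha$ and one is in the $\gamma\equiv1$ situation). The value at $(\lambda,\mu)=(0,0)$ may therefore be computed by first setting $\mu=0$ --- which removes $\gamma$ entirely --- and then $\lambda=0$, so the answer is literally the $\gamma\equiv1$ answer; no expansion, cancellation, or tail estimate is needed. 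For the $\gamma\equiv1$ case itself, the paper also records the case your sketch blurs: when the monomial $t^b$ involves a residue variable $t_j$ (one carrying a factor $\dbar(1/t_j^{a_j})$), the currents are identically zero already for $\re\lambda\gg0$, since $|t_j|^{2\lambda b_j}$ and sufficiently many of its derivatives vanish on the support of $\dbar(1/t_j^{a_j})$; the nontrivial continuation occurs only in the principal-value variables, where the values at $\lambda=0$ are $\alpha/t^{m+b}$ and $\dbar(1/t^{m+b})\w\alpha$. I recommend you restructure your argument around this two-parameter observation rather than the expansion of $\psi^\lambda$.
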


\begin{proof}
 First assume that  $\gamma=1$.  Then the lemma is 
basically a one-variable statement, and  follows from the observation that
$$
\lambda\mapsto \frac{|t^b|^{2\lambda}}{t^b} \frac{\alpha}{t^m}, \quad
\lambda\mapsto \frac{\dbar |t^b|^{2\lambda}}{t^b} \w\frac{\alpha}{t^m},
$$
admit   the desired analytic continuations, and that the values at $\lambda=0$ are the currents $\alpha/t^{m+b}$
and $\dbar(1/t^{m+b})\w\alpha$, respectively,
together with the trivial fact that
$$
\lambda\mapsto \frac{|t^b|^{2\lambda}}{t^b} \alpha \w\dbar\frac{1}{t^m}=0,
\quad 
\lambda\mapsto \frac{\dbar |t^b|^{2\lambda}}{t^b} \w\alpha \w\dbar\frac{1}{t^m}=0,
$$
when $\re\lambda \gg 0$.

When $\gamma$ is just  strictly positive  we introduce the complex parameter
$\mu$ and notice that 
$$
\lambda,\mu\mapsto 
\frac{|t^b|^{2\lambda}\gamma^{\mu}}{t^b} \tau, \quad
\lambda,\mu\mapsto \frac{\dbar|t^b|^{2\lambda}\gamma^{\mu}}{t^b} \w \tau, 
$$
are analytic for  $(\lambda,\mu)\in \{\re\lambda>-\epsilon\}\times\C$. Thus the value at $\lambda=\mu=0$ can be obtained
by first letting $\mu=0$ and then $\lambda=0$, and so we are back to the case when $\gamma=1$.
\end{proof}


\begin{lma}\label{1ursnygg}
Assume that $p\colon Y\to X\subset\subset\C^n$ is a modification or a simple projection and
$\tau$ is an elementary \pmm current in $X$ (with respect to the standard
coordinates in $\C^n$). Then  there is a modification
$\tilde p\colon\widetilde Y\to Y$ such that
$$
\tau=p_*\tilde p_*\sum_\ell \tau_\ell,
$$
where the sum is finite and each $\tau_\ell$ is elementary with respect to some
local coordinates in  $\widetilde Y$.
\end{lma}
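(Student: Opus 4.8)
The plan is to treat separately the case that $p$ is a simple projection and the case that $p$ is a modification, using throughout that a composition of modifications is again a modification.

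Suppose first that $p\colon Y=X\times Z\to X$ is a simple projection; we may assume $\dim Z>0$ since otherwise the statement is trivial. Take a resolution $\tilde p_0\colon\widetilde Z\to Z$, so that $\tilde p:=\mathrm{id}_X\times\tilde p_0\colon\widetilde Y:=X\times\widetilde Z\to Y$ is a modification, and choose a smooth form $\omega$ of top degree with compact support on $\widetilde Z$ such that $\int_{\widetilde Z}\omega=1$, together with a partition of unity $\{\rho_\ell\}$ near $\mathrm{supp}\,\omega$ subordinate to finitely many coordinate charts of $\widetilde Z$. Then each $\tau_\ell:=\tau\otimes(\rho_\ell\omega)$ is elementary with respect to the coordinates obtained from $t_1,\dots,t_n$ by adjoining the coordinates on the relevant chart, and $p_*\tilde p_*\sum_\ell\tau_\ell=(p\circ\tilde p)_*(\tau\otimes\omega)=\tau$. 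From now on $p$ is a modification.

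Let $D\subset X$ be the union of the coordinate hyperplanes $\{t_i=0\}$, $i=1,\dots,n$. By resolution of singularities followed by principalization of the ideal of $D$ there is a modification $\tilde p\colon\widetilde Y\to Y$ with $\widetilde Y$ smooth such that, setting $q:=p\circ\tilde p\colon\widetilde Y\to X$, the analytic set $E:=q^{-1}(D)$ is a normal crossings divisor; near any point of $\widetilde Y$ we may thus choose local coordinates $s=(s_1,\dots,s_m)$ with $E=\{s_1\cdots s_k=0\}$, and since $q^{-1}\{t_i=0\}\subset E$ we get $q^*t_i=s^{c^{(i)}}u_i$ with $c^{(i)}\in\mathbb{Z}_{\ge 0}^m$ and $u_i$ a nonvanishing holomorphic function. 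For $\vec\lambda=(\lambda_1,\dots,\lambda_n)$ with $\re\lambda_i\gg 0$ put
$$
\tau^{\vec\lambda}:=\dbar\big(|t_1|^{2\lambda_1}\big)\w\frac{1}{t_1^{a_1}}\w\cdots\w\dbar\big(|t_r|^{2\lambda_r}\big)\w\frac{1}{t_r^{a_r}}\w\frac{|t_{r+1}|^{2\lambda_{r+1}}\cdots|t_n|^{2\lambda_n}}{t_{r+1}^{a_{r+1}}\cdots t_n^{a_n}}\,\alpha .
$$
For $\re\lambda_i\gg 0$ this is a form with continuous coefficients, so $q^*\tau^{\vec\lambda}$ is continuous and $q_*q^*\tau^{\vec\lambda}=\tau^{\vec\lambda}$ because $q$ is a modification; moreover $\tau^{\vec\lambda}$ continues analytically to $\vec\lambda=0$ with value $\tau$, since for $\re\lambda_i\gg 0$ the extra terms $|t_i|^{2\lambda_i}\dbar(1/t_i^{a_i})$ vanish identically. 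It therefore suffices to show that $q^*\tau^{\vec\lambda}$ also continues analytically to $\vec\lambda=0$ and that the value $\nu$ there is a finite sum of elementary currents on $\widetilde Y$: then $q_*\nu=\tau$ by continuity of $q_*$, i.e. $\tau=p_*\tilde p_*\nu$.

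In the local coordinates $s$ we substitute $q^*t_i=s^{c^{(i)}}u_i$; expanding $\dbar(|s^{c^{(i)}}u_i|^{2\lambda_i})$ and collecting the powers of $|s_\nu|^2$ and of $1/s_\nu$, the form $q^*\tau^{\vec\lambda}$ becomes, up to a smooth factor assembled from $q^*\alpha$, the units $u_i$, and the bounded factors $|u_i|^{2\lambda_i}$, a finite linear combination of tensor products of the one-variable expressions occurring in Lemma~\ref{1doda}. By that lemma — together with the auxiliary-parameter device in its proof, which absorbs the unit factors — each such term continues analytically to $\vec\lambda=0$ with value an elementary current in the coordinates $s$, while the terms containing a factor $d\bar s_\nu\w d\bar s_\nu$ or a spurious residue factor $\dbar(1/s_\nu^0)$ vanish. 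Since $X\subset\subset\C^n$ and $q$ is proper, $\mathrm{supp}\,\nu$ is compact, so multiplying by a partition of unity subordinate to finitely many such coordinate patches exhibits $\nu$ as a finite sum $\sum_\ell\tau_\ell$ of elementary currents, and $\tau=p_*\tilde p_*\sum_\ell\tau_\ell$. The main obstacle is precisely this last step: one must verify that the monomial substitution genuinely converts $q^*\tau^{\vec\lambda}$ into a combination of the expressions handled by Lemma~\ref{1doda} and that the degenerate terms drop out, so that the analytic continuation produces elementary currents rather than something more singular.
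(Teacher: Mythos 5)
Your argument is essentially the paper's own proof: resolve further so that the pullbacks $q^*t_i$ become monomials times nonvanishing factors in local coordinates, introduce a separate parameter $\lambda_i$ for each coordinate, apply Lemma~\ref{1doda} repeatedly on the charts of a partition of unity (finitely many by properness), and push forward; the simple projection case is likewise handled by tensoring with a form of total integral one. The only caveat is that the analytic continuation should be understood as iterated, setting one $\lambda_i$ to zero at a time via Lemma~\ref{1doda}, rather than as a joint limit at $\vec\lambda=0$ --- the paper remarks that the individual terms $\tau_\ell$ (though not the identity $\tau=p_*\tilde p_*\sum_\ell\tau_\ell$) depend on the chosen order.
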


\begin{proof}
Let us first assume that $p$ is a modification and that
$\tau$ is elementary with respect to the coordinates $t_j$ in  $X$,
say  of the form \eqref{element}.
Notice that $p^*t_j$ are global holomorphic functions in $Y$.
There is a smooth modification $\tilde p\colon \widetilde Y\to Y$ and
an open cover $\U_\ell$ of $\tilde Y$ such that,  for each $\ell$,  all the functions
$\tilde p^*p^*t_j$ are monomials
(with respect to the same local coordinates $s$) times a nonvanishing holomorphic factor
in $\U_\ell$.  
Take a partition of unity $\chi_\ell$ subordinate
to $\U_\ell$.
If
$$
\tau^\lambda:=\tau^{\lambda_1,\ldots,\lambda_N}:=
\frac{\dbar|t_{1}|^{2\lambda_1}}{t_{1}^{a_{1}}}\w\ldots
\w \frac{\dbar|t_{r}|^{2\lambda_r}}{t_r^{a_r}} \w
\alpha\frac{|t_{r+1}|^{2\lambda_{r+1}}}{t_{r+1}^{a_{r+1}}}\cdots\frac{|t_N|^{2\lambda_N}}{t_N^{a_N}},
$$
where $N\le n¢$, then
$$
\tau=\tau^{\lambda_1,\ldots,\lambda_N}|_{\lambda_N=0}\cdots|_{\lambda_1=0}.
$$
Let $\pi=\tilde  p\circ p$. For $\lambda\gg 0$ we have that
$$
\pi^*\tau^\lambda=\sum_\ell\chi_\ell\pi^*\tau^\lambda.
$$
By repeated applications of Lemma \ref{1doda} it follows,  for each $\ell$, that
\begin{equation}\label{1uthus2}
\chi_\ell\pi^*\tau^\lambda|_{\lambda_N=0}\cdots|_{\lambda_1=0}
\end{equation}
exists and is a finite sum $\tilde\tau_\ell$  of  elementary  currents in
$\U_\ell$.
Since $\tau^\lambda=\pi_*\pi^*\tau^\lambda$ when $\re\lambda\gg 0$,  we conclude that 
$$
\tau=\pi_*\sum_\ell\tilde\tau_\ell=p_*\tilde p_*
\sum_\ell\tilde\tau_\ell.
$$


\smallskip
If $p$ is a simple projection $X\times X'\to X$, we can take any test form  $\chi$ in $X'$
with total integral $1$. Then the tensor product $\tau\otimes\chi$ is en elementary current
in $X\times X'$ such that $p_*(\tau\otimes\chi)=\tau$.
\end{proof}

The order that we let $\lambda_j$  be $0$ in the proof is arbitrary.
However, the single terms $\tilde\tau_\ell$ in $\tilde Y$, as well as 
the resulting  current $\tilde p\tau$,
will depend on the order.

\begin{proof}[Proof of Proposition \ref{kraka}]
Assume that $\mu=\pi_* \tau$, where $\pi$ is a composed mapping as in
\eqref{1struts} and $\tau$ is elementary in $\U_m$. It is enough to see  that $\mu=p_*\mu'$
for some $\mu'\in \PM(\V)$ where $\V=p^{-1}\U$.
The proposition then follows since a general global section for a locally finite sum och such $\mu$ 
since $p$ is proper.
We claim that \eqref{1struts}
can be extended to a  commutative diagram
\begin{equation}\label{kommdiagram}
\begin{array}[c]{ccccccccccc}
\widetilde\V &= & \V_m & \stackrel{\tilde{\pi}_m}{\longrightarrow} & \cdots &
\stackrel{\tilde{\pi}_2}{\longrightarrow}
& \V_1 & \stackrel{\tilde{\pi}_1}{\longrightarrow} & \V_0 & = & \V \\
& &\downarrow \scriptstyle{p_m} & & & & \downarrow \scriptstyle{p_1} & & \downarrow \scriptstyle{p} & & \\
\widetilde U &= & \U_m & \stackrel{\pi_m}{\longrightarrow} & \cdots & \stackrel{\pi_2}{\longrightarrow} &
\U_1  & \stackrel{\pi^1}{\longrightarrow} & \U_0 & = & \U
\end{array}
\end{equation}
so that each vertical map is a modification and each $\tilde{\pi}_j$
is either a modification,   a simple projection, or an open inclusion,
cf., the proof of Proposition~2.7 in \cite{AS}. 
To see this,  assume that this is done up to level $k$.
It is well-known that if $\pi_{k+1}\colon \U_{k+1}\to \U_{k}$ is a modification, then there are
modifications $\tilde{\pi}_{k+1}\colon \V_{k+1}\to \V_{k}$ and
$p_{k+1}\colon \V_{k+1}\to \U_{k+1}$ such that
\begin{equation*}
\begin{array}[c]{ccc}
\V_{k+1} & \stackrel{\tilde{\pi}_{k+1}}{\longrightarrow} & \V_k \\
\downarrow \scriptstyle{p_{k+1}} & & \downarrow \scriptstyle{p_k} \\
\U_{k+1} & \stackrel{\pi^{k+1}}{\longrightarrow} & \U_k
\end{array}
\end{equation*}
commutes. If instead $\U_{k+1}=\U_k \times Z$ then we simply take
$\V_{k+1}=\V_k \times Z$. Finally, if $i\colon\U_{k+1}\to\U_k$ is an open
inclusion, then we take $\V_{k+1}=p_k^{-1}\U_{k+1}$.

By  Lemma~\ref{1ursnygg} there is a \pmm current $\tilde\tau$ with compact support in
$\V_m$ such that $p_m\tilde\tau=\tau$. If $\tilde\pi$ is the composed
mapping in the upper line, it follows that $\mu'=\tilde\pi_*\tilde\tau$ is
\pmm in $\V$ such that $p_*\mu'=\mu$.

\end{proof}


\begin{lma}\label{tensor} 
If $\mu\in \PM(X)$ and $\mu'\in\PM(X')$, then
$\tau\otimes \tau'\in \PM(X\times X')$.
\end{lma}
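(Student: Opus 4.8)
The plan is to reduce at once to the situation where $\mu$ and $\mu'$ are each a single push-forward of an elementary current, and then to check that the relevant structure is preserved under taking products. Since $\PM$ is a sheaf and the exterior tensor product is bilinear and compatible with locally finite sums, I would first write $\mu=\sum_\ell (\pi_\ell)_*\tau_\ell$ and $\mu'=\sum_m (\pi'_m)_*\tau'_m$ as locally finite sums of the form \eqref{batting}, with all $\tau_\ell,\tau'_m$ elementary and of compact support. Then $\mu\otimes\mu'=\sum_{\ell,m}(\pi_\ell)_*\tau_\ell\otimes(\pi'_m)_*\tau'_m$, and this double sum is locally finite on $X\times X'$: a point $(x,x')$ has a neighborhood $\V$ meeting the supports of only finitely many $(\pi_\ell)_*\tau_\ell$ and a neighborhood $\V'$ meeting the supports of only finitely many $(\pi'_m)_*\tau'_m$, so $\V\times\V'$ meets the supports of only finitely many of the product currents. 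Hence it is enough to prove that $\pi_*\tau\otimes\pi'_*\tau'$ is pseudomeromorphic when $\pi,\pi'$ are compositions of maps as in \eqref{1struts} and $\tau,\tau'$ are elementary.

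Next I would use that push-forward under a proper map commutes with exterior tensor products, so that
$$
\pi_*\tau\otimes\pi'_*\tau'=(\pi\times\pi')_*(\tau\otimes\tau').
$$
Writing $\pi=\pi^1\circ\cdots\circ\pi^m$ and $\pi'=\pi'^1\circ\cdots\circ\pi'^{m'}$ and factoring $\pi\times\pi'=(\mathrm{id}\times\pi')\circ(\pi\times\mathrm{id})$, one sees that $\pi\times\pi'$ is a composition of maps each of the form $\pi^j\times\mathrm{id}$ or $\mathrm{id}\times\pi'^k$. I claim each of these is again a modification, a simple projection, or an open inclusion, possibly after an innocuous permutation of product factors (which is itself a biholomorphism, hence a modification): the product of a modification with an identity is a modification (it is proper and biholomorphic over the complement of the preimage of a nowhere dense analytic set), a map $(\U^{j-1}\times Z)\times W\to \U^{j-1}\times W$ is, after reordering, the simple projection $(\U^{j-1}\times W)\times Z\to \U^{j-1}\times W$, and the product of an open inclusion with an identity is an open inclusion. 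Thus $\pi\times\pi'$ is again a tower of the type \eqref{1struts}.

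It remains to observe that $\tau\otimes\tau'$ is elementary on the product. If $\tau$ is of the form \eqref{element} in local coordinates $t$ and $\tau'$ is of the analogous form in coordinates $s$, then $\tau\otimes\tau'$ is, up to sign and a reordering of the residue and principal value factors, once more of the form \eqref{element} in the product coordinates $(t,s)$, with smooth compactly supported factor $\alpha(t)\w\alpha'(s)$. This proves the lemma. I do not expect a genuine obstacle here; the only points that require a word of care are the bookkeeping showing that the product maps fall into the three admissible classes and the remark that the exterior tensor product of two elementary currents is again elementary, and both are routine.
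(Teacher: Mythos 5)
Your proposal is correct and follows essentially the same route as the paper: reduce to a single push-forward of an elementary current on each factor, observe that $\pi\times\pi'$ is again a composition of modifications, simple projections, and open inclusions, and that the exterior tensor product of elementary currents is elementary. You simply spell out the bookkeeping (local finiteness of the double sum, factoring $\pi\times\pi'$ through maps of the form $\pi^j\times\mathrm{id}$) that the paper dismisses as ``easily verified.''
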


\begin{proof} It is enough to consider the case $\mu=\pi_*\tau$,
$\mu'=\pi'_*\tau'$, where $\tau,\tau'$ are elementary, and 
$\pi,\pi'$ are compositions of mappings as  in \eqref{1struts}.
However, it is easily verified that then
$$
\pi\otimes\pi'\colon \U_m\times\U_{m'}\to \U\times\U'\subset X\times X'
$$
is again a composition of modifications, simple projections, and open inclusions.
Since $\mu\otimes\mu'=(\pi\otimes\pi')_*\tau\otimes\tau'$  it is
pseudomeromorphic by definition.
\end{proof}




As already mentioned the proof of Theorem \ref{L1} relies on the existence of a 
{\it strong desingularization}, 
see, e.g., \cite{BESVO} and the refererences given there. This  means
that there is a smooth modification $p\colon \widetilde Y\to Y$ that is a biholomorphism outside
$X_{sing}$ and such that the strict transform $\widetilde X$ of $X$ is a smooth  submanifold
of $\widetilde Y$ and the restriction $p'$ of $p$ to $\widetilde X$ is a modification
$p'\colon \widetilde X\to X$ of $X$. Thus we have a commutative diagram
\begin{equation}\label{strong}
\begin{array}[c]{ccccccccc}
\widetilde X & \stackrel{\tilde{i}}{\longrightarrow} & \widetilde Y \\
\downarrow \scriptstyle{p'} & &  \downarrow \scriptstyle{p} \\
X  & \stackrel{i}{\longrightarrow} & Y
\end{array}.
\end{equation}

\begin{proof}[Proof of Theorem \ref{L1}]
 First assume that $X$ is a smooth submanifold. The statement (i) is local so
we may assume that $Y=X_z\times \C^r$  and $i(z)=(z,0)$.  It is easily checked that
$i_*\tau$ is equal to the tensor product
\begin{equation}\label{pia}
\mu:=\tau\w[w=0] 
\end{equation}
where $[w=0]$ means the point evalutation at $0\in\C^r$. In view of  Lemma~\ref{tensor}
it is then pseudomeromorphic since $[w=0]=\dbar\frac{1}{w}\w dw (2\pi i)^{-r}$ is.
For a test form $\xi=\xi(z,w)$, we can write 
$\xi=\xi'+\xi''$, where $\xi'$ contains no occurrences of $dw_j$ or $d\bar w_j$. 
Then
$$
i_*\tau.\xi=\tau.i^*\xi=\tau.i^*\xi'=\tau.\xi'(\cdot,0)=\mu.\xi,
$$
cf., \eqref{pia}, and hence $i_*\tau=\mu$ is pseudomeromorphic in $Y$. 
Now assume that $i\colon X\to Y$ is arbitrary and consider \eqref{strong}. Any $\tau\in\PM(X)$
can be written $p'_*\tilde\tau$ for some $\tilde\tau\in\PM(\widetilde X)$ according to
Proposition \ref{kraka}. By  the first part we now that $\tilde i_*\tilde\tau$ is
pseudomeromorphic in $\widetilde Y$.
Thus 
$i_*\tau=i_*p'_*\tilde\tau=p_*\tilde i_*\tilde \tau$ is pseudomeromorphic in $Y$, and
so the first part of (i) is proved.

Assume that  $V\subset X$ has positive codimension.  Since $i^{-1}V=V$ we have, cf., \eqref{pia3},
that $\1_V i_*\tau=i_* \1_V\tau$. Thus $i_*\tau$ is in $\W^Y_X$ if (and only if) $\tau$ is in $\W^X$,
and so the second part of (i) follows.

\smallskip
We now consider (ii).  
Again assume first that $X$ is smooth. Again the statement is local so we may assume that
$Y=X_z\times \C^r_w$.  Let $\pi\colon Y\to X_z$ be the projection  $(z,w)\mapsto z$.
Since $i_*\tau$ is \pmm by assumption also $p_*i_*\tau$ is pseudomeromorphic.  
Now,
$$
p_*i_*\tau.i^*\xi=i_*\tau.p^*i^*\xi=i_*\tau.\xi'(\cdot,0)=\tau.i^*\xi,
$$
for all test forms $\xi$,  and hence $p_*i_*\tau$. We conclude that $\tau$ is in $\PM^X$. 
Thus (ii) holds in case $X\subset Y$ is smooth.

Now assume that $i\colon X\to Y$ is general,  $\mu:=i_*\tau\in\PM(Y)$,
and consider \eqref{strong}. We claim that  
$\mu=p_*\tilde\mu$,
where $\tilde\mu\in\PM(\widetilde Y)$,
$\tilde\mu$ has support on $\widetilde X$, and 
$\1_{p^{-1}X_{sing}}\tilde\mu=0$.  
To begin with $\mu=p_*\hat \mu$ for some $\hat\mu\in\PM(\widetilde Y)$
according to Proposition \ref{kraka}.  
Since 
$$
0=\1_{Y\setminus X}p_*\hat\mu=p_*(\1_{\widetilde Y\setminus  p^{-1}X}\hat\mu),
$$
cf., \eqref{pia3}, we have that $\mu=p_* \mu'$ where $\mu':=\1_{p^{-1}X}\hat\mu$ has support on  $p^{-1}X$.
Notice that this set is in general much larger than the strict transform
$\widetilde X$ of $X$. Now
$$
\mu'=\1_{p^{-1}X_{sing}}\mu'+\1_{p^{-1}(X\setminus X_{sing})}\mu'
$$
and, by assumption \eqref{pia2},
$0=\1_{X_{sing}}\mu=p_*\1_{p^{-1}X_{sing}}\mu'$, and thus  $\mu=p_*\tilde\mu$
where 
$$
\tilde\mu:=\1_{p^{-1}(X\setminus X_{sing})}\mu'
$$
has support on the closure of $p^{-1}(X\setminus X_{sing})$
which is (contained in) $\widetilde X$. Thus the claim is proved.

Next we claim that $\tilde\mu=\tilde i_* \tilde\tau$ for a current $\tilde\tau$ on 
$\widetilde X$.  In fact, let
$\xi$ is a test form on $\widetilde Y$ such that $\tilde i^*\xi=0$. Since $p$ is
a biholomorphism outside $p^{-1}X_{sing}$, $\xi\w\tilde\mu=0$ there since $\mu=i_*\tau$ there.
Since $\tilde\mu$ has support on $\widetilde X$ it follows that
$\xi\w\tilde\mu=0$ outside $\widetilde X\cap p^{-1}X_{sing}$, and hence $\xi\w\tilde\mu=0$ by continuity.
Thus the claim follows.  

From the smooth case we know  that $\tilde\tau$ is \pmm and therefore  
 $p'_*\tilde\tau$ is \pmm as well.
Finally,  $i_*p'_*\tilde\tau=p_*\tilde i_*\tilde\tau=p_*\tilde\mu=\mu=i_*\tau$
and thus $p'_*\tilde\tau=\tau$. Thus $\tau$ is pseudomeromorphic.   The second
part of (ii) is verified as the second part of (i).
\end{proof}

\end{document}